\DeclareMathOperator{\argmin}{argmin}
\DeclareMathOperator{\aff}{aff}
\DeclareMathOperator{\nnz}{nnz}
\begin{document}
\newtheorem{oberklasse}{OberKlasse}
\newtheorem{definition}[oberklasse]{Definition}
\newtheorem{lemma}[oberklasse]{Lemma}
\newtheorem{proposition}[oberklasse]{Proposition}
\newtheorem{theorem}[oberklasse]{Theorem}
\newtheorem{corollary}[oberklasse]{Corollary}
\newtheorem{remark}[oberklasse]{Remark}
\newtheorem{example}[oberklasse]{Example}

\newcommand{\R}{\mathbbm{R}}
\newcommand{\N}{\mathbbm{N}}
\newcommand{\Z}{\mathbbm{Z}}
\newcommand{\C}{\mathbbm{C}}
\newcommand{\mc}{\mathcal}
\newcommand{\eps}{\varepsilon}
\renewcommand{\phi}{\varphi}
\newcommand{\cA}{{\mathcal A}}

\allowdisplaybreaks[1] 

\title{Generalized Gearhart-Koshy acceleration for the Kaczmarz method}
\author{J.\ Rieger}
\date{\today}
\maketitle

\begin{abstract}
The Kaczmarz method is an iterative numerical method for solving 
large and sparse rectangular systems of linear equations.
Gearhart, Koshy and Tam have developed an acceleration technique 
for the Kaczmarz method that minimizes the distance to the desired solution
in the direction of a full Kaczmarz step.

The present paper generalizes this technique to an acceleration scheme that 
minimizes the Euclidean norm error over an affine subspace spanned by a number 
of previous iterates and one additional cycle of the Kaczmarz method. 
The key challenge is to find a formulation in which all parameters of the
least-squares problem defining the unique minimizer are known,
and to solve this problem efficiently.

A numerical experiment demonstrates that the proposed affine search has the 
potential to clearly outperform the Kaczmarz and the randomized Kaczmarz 
methods with and without the Gearhart-Koshy/Tam line-search.
\end{abstract}

\noindent\textbf{MSC Codes:} 65F10, 65F20, 68W20\\

\noindent\textbf{Keywords:} Kaczmarz method, randomized Kaczmarz method,
acceleration, least-squares problem, computerized tomography

\section{Introduction}

The Kaczmarz method for solving systems of linear equations was initially described 
and analyzed in \cite{Kaczmarz}.
It was later rediscovered in the paper \cite{Gordon:70} in the context of computerized
tomography problems, where it was used with great success.
Being a row-action method, it tends to behave well when applied to large and sparse
rectangular linear systems, see \cite{Censor}.

The performance of the Kaczmarz method depends on the fixed order in which the 
method cycles through the rows of the linear system.
The randomized Kaczmarz method avoids the selection of a disadvantageous
order by selecting the rows at random.
It was proved in \cite{Strohmer} that this approach yields linear convergence
in expectation with a rate corresponding to the geometry of the problem. 

\medskip

Recently, there has been a strong emphasis on the development of acceleration schemes
for the randomized Kaczmarz method.
Some are based on splittings of the set of rows into a priori known well-conditioned
blocks, see \cite{Necoara} and \cite{Needell}, while others are based on Motzkin
acceleration see \cite{Motzkin}. 
The latter method selects the next row of the linear system corresponding 
to the largest component of the current residual instead of cycling through the rows 
in a given order.
Novel probabilistic variants of this approach select the next row with 
a probability distribution proportional to or otherwise derived from the current
residual, see \cite{Bai}, \cite{Haddock}, \cite{Haddock:Needell}, \cite{Steinerberger}
and the references therein.
In a sense, these methods are greedy algorithms that aim at decreasing the
residual as fast as possible.

\medskip

The line-search proposed in \cite{Gearhart} by Gearhart and Koshy for homogeneous
and recently in \cite{Tam} by Tam for inhomogeneous linear systems in the 
context of the deterministic Kaczmarz method is pursuing a greedy 
strategy that is diametrically opposed to Motzkin acceleration:
It uses one full cycle of the Kaczmarz method as a search direction and minimizes
the Euclidean norm error (instead of the residual) over the corresponding line.
This is achieved using only information that is explicitly known at runtime, which
means that this acceleration is computationally inexpensive.

Example 3.24 in \cite{Bauschke} shows that in pathological situations, the Kaczmarz
method with this line-search can be slower than the plain Kaczmarz method, while a
straight-forward modification of the convergence proof in \cite{Bregman} reveals that
it is necessarily convergent.

\medskip

The aim of this paper is to generalize the Gearhart-Koshy line-search 
to an acceleration scheme that minimizes the Euclidean norm error over an 
affine subspace spanned by a number of previous iterates and one additional 
cycle of the Kaczmarz method. 
This acceleration strategy is not limited to the deterministic Kaczmarz method,
but can be applied to the randomized Kaczmarz method as well.

The key challenge is to find a formulation in which all parameters of the
least-squares problem defining the unique minimizer are known,
and to solve this problem efficiently.
It turns out that this is possible in linear time because of the particular
structure of the problem.
A numerical experiment provided in the final section of the paper demonstrates 
that the proposed affine search has the potential to clearly outperform 
the Kaczmarz and the randomized Kaczmarz methods with and without the
Gearhart-Koshy line-search.

\medskip 

Finally, we would like to point out that the proposed method does not compete with the 
above-mentioned accelerations based on splittings and Motzkin acceleration, 
but it can in principle be applied to further enhance these and other methods 
based on successive projections.

\section{Preliminaries}

Throughout this paper, we consider a matrix
\[A=(a_1,\ldots,a_m)^T\in\R^{m\times n}\]
with rows $a_j\in\R^n\setminus\{0\}$ and a vector $b\in\mc{R}(A)$ in the range of $A$, 
and we consider the projectors 
\[P_j:\R^n\to\R^n,\quad
P_j(x):=(I-\frac{a_ja_j^T}{\|a_j\|^2})x+\frac{b_j}{\|a_j\|^2}a_j,\quad j=1,\ldots,m,\]
which project any point $x\in\R^n$ to the affine subspaces
\[H_j:=\{z\in\R^n:a_j^Tz=b_j\},\quad j=1,\ldots,m.\]
Their compositions
\[P(x):=(P_m\circ\ldots\circ P_1)(x)\]
constitute a full cycle of the Kaczmarz method.
It is well-known that for any $x_0\in\R^n$, we have 
$\lim_{k\to\infty}P^k(x_0)\in A^{-1}b$, 
see e.g.\ \cite{Tanabe}.

\medskip

When determining the computational complexity of the Kacz\-marz method and the
accelerated variants discussed in this paper, we will denote the number of nonzero
elements of the matrix $A$ by $\nnz(A)>0$.
As the matrix $A$ is large in typical applications, we will assume that scalar
quantities such as the norms $\|a_j\|^2$ can be stored, but not the normalized rows
$a_j/\|a_j\|$.
In this situation, we can carry out one Kaczmarz cycle with $4\nnz(A)+m$ flops.

\medskip

\begin{algorithm}\label{plain:Kaczmarz}
\caption{Kaczmarz method, 
originally proposed in \cite{Kaczmarz}\\
(complexity: $4\nnz(A)+m$ flops per cycle)}
\KwIn{
    $A\in\R^{m\times n}$,
    $b\in\R^m$,
    $x_0\in\R^n$   
}
\For{$k=0$ \KwTo $\infty$}{
    $x_{k+1}\gets P(x_k)$\;
}
\end{algorithm}

Inspired by the papers \cite{Gearhart} and \cite{Tam}, we wish to explore 
how the residual
\begin{equation}\label{res:def}
r(x):=\begin{pmatrix}
(a_1^Tx-b_1)/\|a_1\|\\
(a_2^TP_1(x)-b_2)/\|a_2\|\\
\vdots\\
(a_m^TP_{m-1}\circ\ldots\circ P_1(x)-b_m)/\|a_m\|
\end{pmatrix}
\end{equation}
can be used to speed up the Kaczmarz iteration.
Note that the quantities required for its computation are explicitly calculated 
in a cycle of the Kaczmarz method. 

\medskip

We begin by exploring the properties of the residual $r$,
which measures the reduction of the square distance to any solution
of the linear system in one Kaczmarz cycle and encodes information
on the angle between the vectors $x^*-x$ and $P(x)-x$.

\begin{lemma} \label{improvement}
Let $x^*\in A^{-1}b$, and let $x\in\R^n$ be arbitrary.
Then we have 
\begin{align}
&\|r(x)\|^2+\|P(x)-x^*\|^2=\|x-x^*\|^2,\label{imp:1}\\
&\|r(x)\|^2+\|P(x)-x\|^2=2(x-x^*)^T(x-P(x)).\label{imp:2}
\end{align}
\end{lemma}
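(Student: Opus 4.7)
The plan is to introduce the intermediate iterates of one Kaczmarz cycle, $x^{(0)}:=x$ and $x^{(j)}:=P_j(x^{(j-1)})$ for $j=1,\ldots,m$, so that $x^{(m)}=P(x)$. I would derive both identities from the orthogonality built into each single-hyperplane projection $P_j$, which is the only geometric fact needed; everything else is algebra.

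For equation \eqref{imp:1}, I would first rewrite the projector as
\[
x^{(j)}-x^{(j-1)}=-\frac{a_j^Tx^{(j-1)}-b_j}{\|a_j\|^2}\,a_j,
\]
and read off directly that $\|x^{(j)}-x^{(j-1)}\|^2=r_j(x)^2$, where $r_j(x)$ is the $j$-th component of the residual in \eqref{res:def}. Then, since $x^{(j)}$ is the orthogonal projection of $x^{(j-1)}$ onto $H_j$ and $x^*\in H_j$ (because $x^*\in A^{-1}b$), the vector $x^{(j-1)}-x^{(j)}$ is orthogonal to $x^{(j)}-x^*$, and Pythagoras gives
\[
\|x^{(j-1)}-x^*\|^2=\|x^{(j-1)}-x^{(j)}\|^2+\|x^{(j)}-x^*\|^2
=r_j(x)^2+\|x^{(j)}-x^*\|^2.
\]
A telescoping sum over $j=1,\ldots,m$ collapses the left-hand side to $\|x-x^*\|^2-\|P(x)-x^*\|^2$ and the right-hand side to $\|r(x)\|^2$, proving \eqref{imp:1}.

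For equation \eqref{imp:2}, no additional geometry is needed: I would expand $\|x-x^*\|^2$ and $\|P(x)-x^*\|^2$ and plug into \eqref{imp:1} to obtain
\[
\|r(x)\|^2=\|x\|^2-\|P(x)\|^2-2(x-P(x))^Tx^*,
\]
then add $\|P(x)-x\|^2=\|P(x)\|^2-2P(x)^Tx+\|x\|^2$. The $\|P(x)\|^2$ terms cancel and the remaining terms regroup as $2x^T(x-P(x))-2(x^*)^T(x-P(x))=2(x-x^*)^T(x-P(x))$, which is \eqref{imp:2}.

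The only potentially subtle point is the identification $\|x^{(j)}-x^{(j-1)}\|^2=r_j(x)^2$, and this is just a direct consequence of the explicit formula for $P_j$ together with the normalization $1/\|a_j\|$ used in the definition of $r(x)$; I do not foresee any real obstacle in the argument.
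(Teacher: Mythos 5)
Your proof is correct and follows essentially the same route as the paper: single-step orthogonality of each hyperplane projection, Pythagoras, and a telescoping (the paper's ``successively applied'') identification of the step lengths with the components of $r(x)$, followed by pure algebra for \eqref{imp:2} (the paper packages your expansion as the polarization identity). No gaps.
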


\begin{proof}
Since
\begin{align*}
\langle P_j(x)-x,P_j(x)-x^*\rangle
&=\langle\frac{b_j-a_j^Tx}{\|a_j\|^2}a_j,x+\frac{b_j-a_j^Tx}{\|a_j\|^2}a_j-x^*\rangle\\
&=\frac{b_j-a_j^Tx}{\|a_j\|^2}\Big(a_j^Tx+(b_j-a_j^Tx)-b_j\Big)=0,
\end{align*}
we may use the Pythagorean theorem to compute
\begin{align*}
\|x-x^*\|^2
=\|(x-P_j(x))+(P_j(x)-x^*)\|^2
=\|x-P_j(x)\|^2+\|P_j(x)-x^*\|^2.
\end{align*}
Statement \eqref{imp:1} follows from the above identity
successively applied to $x$, $P_1(x)$, $P_2\circ P_1(x)$ etc.\ in lieu of $x$,
and from the definitions of $P$ and $r$.
Now the polarization identity yields
\begin{align*}
\|r(x)\|^2+\|x-P(x)\|^2
&=\|x-x^*\|^2-\|P(x)-x^*\|^2+\|x-P(x)\|^2\\
&=2(x-x^*)^T(x-P(x)).
\end{align*}
\end{proof}

The mapping $r$ behaves indeed like a residual.

\begin{lemma} \label{res:equiv}
The following statements are equivalent:
\begin{itemize}
\item [a)] We have $Ax=b$.
\item [b)] We have $P(x)=x$.
\item [c)] We have $r(x)=0$.
\end{itemize}
\end{lemma}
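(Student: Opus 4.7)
The plan is to prove the cyclic chain a) $\Rightarrow$ b) $\Rightarrow$ c) $\Rightarrow$ a). The equivalence is essentially a sanity check on the definition of $r$, so I do not anticipate any serious obstacle; the only care needed is in the induction for c) $\Rightarrow$ a), since the $j$-th component of $r(x)$ involves not $x$ directly but the composition $P_{j-1}\circ\ldots\circ P_1(x)$.

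For a) $\Rightarrow$ b), I would argue directly from the closed-form definition of $P_j$: if $Ax=b$, then $a_j^Tx=b_j$ for every $j$, so the correction term $(b_j-a_j^Tx)a_j/\|a_j\|^2$ in $P_j(x)$ vanishes and $P_j(x)=x$. Composing, $P(x)=x$.

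For b) $\Rightarrow$ c), I would invoke Lemma \ref{improvement}. Since $b\in\mc{R}(A)$, there exists $x^*\in A^{-1}b$, and substituting $P(x)=x$ into \eqref{imp:1} yields $\|r(x)\|^2=\|x-x^*\|^2-\|x-x^*\|^2=0$, hence $r(x)=0$.

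For c) $\Rightarrow$ a), I would use induction on $j\in\{1,\ldots,m\}$ to show that $a_j^Tx=b_j$ and $P_j\circ\ldots\circ P_1(x)=x$. The first component of $r(x)$ being zero gives $a_1^Tx=b_1$, so $P_1(x)=x$. Assuming the claim holds up to index $j-1$, the composition $P_{j-1}\circ\ldots\circ P_1(x)$ equals $x$, and the vanishing of the $j$-th component of $r(x)$ reads $(a_j^Tx-b_j)/\|a_j\|=0$, so $a_j^Tx=b_j$ and thus $P_j\circ\ldots\circ P_1(x)=P_j(x)=x$. Once the induction terminates at $j=m$, every equation of the system is satisfied, i.e., $Ax=b$.
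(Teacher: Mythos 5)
Your proposal is correct and follows essentially the same route as the paper: the cyclic chain a) $\Rightarrow$ b) $\Rightarrow$ c) $\Rightarrow$ a), with the direct computation for a) $\Rightarrow$ b), identity \eqref{imp:1} from Lemma \ref{improvement} for b) $\Rightarrow$ c) (the paper phrases this step as a contradiction, you phrase it as a direct substitution, which is an immaterial difference), and the same induction along the rows for c) $\Rightarrow$ a).
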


\begin{proof}
If statement a) holds, then 
\[P_j(x)=(I-\frac{a_ja_j^T}{\|a_j\|^2})x+\frac{b_j}{\|a_j\|^2}a_j=x,\quad 
j=1,\ldots,m,\]
which implies $P(x)=x$.

Assume that statement b) holds, and let $x^*\in\R^n$ be any point with $Ax^*=b$.
If $r(x)\neq 0$, then statement \eqref{imp:1} gives
\[\|P(x)-x^*\|^2<\|x-x^*\|^2,\]
which contradicts $P(x)=x$.
Hence statement c) holds.

If statement c) holds, then statement a) follows by induction.
We clearly have $a_1^Tx=b_1$.
If $a_i^Tx=b_i$ holds for $i=1,\ldots,j$, then $P_i(x)=x$ holds for $i=1,\ldots,j$,
and 
\[a_{j+1}^Tx-b_{j+1}=a_{j+1}^TP_j\circ\ldots\circ P_1(x)-b_{j+1}=r_{j+1}(x)=0.\]
By induction, we obtain $Ax=b$.
\end{proof}

\begin{remark}
A straight-forward modification of the convergence proof 
in \cite{Bregman} reveals that any sequence $(x_k)_{k\in\N}$ satisfying
\begin{equation}\label{better:than:K}
\|x_{k+1}-x^*\|^2\le\|P(x_k)-x^*\|^2\quad\forall\,k\in\N,\ x^*\in A^{-1}b,
\end{equation}
converges to a solution $x^*\in A^{-1}b$, and it is clear that the typical 
error estimates for cyclic projection-type methods as in Corollary 9.34 in
\cite{Deutsch} for the sequence $(P^k(x_0))_{k\in\N}$ also hold for the 
sequence $(x_k)_{k\in\N}$.
Hence we will focus on generating a sequence with the above property
\eqref{better:than:K} by minimizing the errors $\|x_{k+1}-x^*\|^2$ 
in affine search spaces at a relatively small computational cost.
\end{remark}

\section{Acceleration by line-search}

In this section, we recover the step-size from Theorem 4.1 in \cite{Tam}
with a straight-forward geometric argument.
In addition, we quantify the error reduction in terms 
of the difference between statements \eqref{imp:1} and
\eqref{gain:linesearch}.
All quantities involved in these formulas are known at runtime.

\begin{theorem}\label{greedy}
Let $x^*\in A^{-1}b$, and let $x\in\R^n$ with $P(x)\neq x$.
Then we have
\begin{align}
s^*:=\argmin_{s\in\R}\|(1-s)x+sP(x)-x^*\|^2
=\frac12+\frac{\|r(x)\|^2}{2\|P(x)-x\|^2},\nonumber\\
\|x-x^*\|^2-\|(1-s^*)x+s^*P(x)-x^*\|^2
=\frac{(\|r(x)\|^2+\|P(x)-x\|^2)^2}{4\|P(x)-x\|^2}.\label{gain:linesearch}
\end{align}
\end{theorem}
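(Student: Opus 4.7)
The plan is to observe that $f(s) := \|(1-s)x + sP(x) - x^*\|^2$ is a strictly convex quadratic in $s$ (strict because $P(x)\neq x$), so its unique minimizer is characterized by $f'(s^*)=0$. Writing $(1-s)x+sP(x)-x^* = (x-x^*) - s(x-P(x))$ and expanding gives
\[ f(s) = \|x-x^*\|^2 - 2s\,(x-x^*)^T(x-P(x)) + s^2\|P(x)-x\|^2, \]
so the first-order condition yields
\[ s^* = \frac{(x-x^*)^T(x-P(x))}{\|P(x)-x\|^2}. \]

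This expression still contains the unknown solution $x^*$, which is the crux of the problem. The key step is to apply identity \eqref{imp:2} of Lemma \ref{improvement}, which states that
\[ 2(x-x^*)^T(x-P(x)) = \|r(x)\|^2 + \|P(x)-x\|^2, \]
expressing the inner product entirely in terms of quantities that are computed in the course of a Kaczmarz cycle. Substituting this identity gives the claimed formula $s^* = \tfrac{1}{2} + \|r(x)\|^2/(2\|P(x)-x\|^2)$.

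For the error reduction, I would plug $s^*$ back into $f$ to obtain
\[ \|x-x^*\|^2 - f(s^*) = 2s^*(x-x^*)^T(x-P(x)) - (s^*)^2\|P(x)-x\|^2, \]
and then invoke \eqref{imp:2} once more to rewrite the right-hand side as $s^*(\|r(x)\|^2+\|P(x)-x\|^2) - (s^*)^2\|P(x)-x\|^2$. Substituting the explicit value of $s^*$ and simplifying via the elementary identity $\alpha^2/(2\beta) - \alpha^2/(4\beta) = \alpha^2/(4\beta)$ with $\alpha = \|r(x)\|^2+\|P(x)-x\|^2$ and $\beta = \|P(x)-x\|^2$ yields \eqref{gain:linesearch}.

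There is no real obstacle here: once Lemma \ref{improvement} is in hand, the theorem reduces to one-variable calculus together with a single substitution. The conceptual content — and presumably the reason the theorem is worth recording — is precisely that \eqref{imp:2} removes the apparent dependence on the unknown solution $x^*$ from both the optimal step-size and the resulting error reduction, making both quantities computable at runtime.
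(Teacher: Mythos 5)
Your proof is correct and follows essentially the same route as the paper: both expand the squared norm as a strictly convex quadratic in $s$ and use identity \eqref{imp:2} to eliminate the unknown $x^*$ from the optimal step-size and the error reduction. The only cosmetic difference is that the paper substitutes \eqref{imp:2} into the quadratic before minimizing, whereas you minimize first and then substitute.
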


\begin{proof}
Using identity \eqref{imp:2}, we see that the strictly convex parabola
\begin{align*}
g(s)&=\|(1-s)x+sP(x)-x^*\|^2\\
&=\|x-x^*\|^2+2s(x-x^*)^T(P(x)-x)+s^2\|P(x)-x\|^2\\
&=\|x-x^*\|^2-s(\|r(x)\|^2+\|P(x)-x\|^2)+s^2\|P(x)-x\|^2
\end{align*}
has the unique minimum
\[s^*=\frac{\|r(x)\|^2+\|P(x)-x\|^2}{2\|P(x)-x\|^2}
=\frac12+\frac{\|r(x)\|^2}{2\|P(x)-x\|^2}.\]
The second statement follows from
\[g(s^*)=\|x-x^*\|^2-\frac{(\|r(x)\|^2+\|P(x)-x\|^2)^2}{4\|P(x)-x\|^2}.\]
\end{proof}

\begin{algorithm}\label{greedy:line:search}
\caption{Kaczmarz method with line-search,\\
originally proposed in \cite{Tam}\\
(complexity: $4\nnz(A)+3m+5n$ flops per cycle)}
\KwIn{
    $A\in\R^{m\times n}$,
    $b\in\R^m$,
    $x_0\in\R^n$   
}
\For{$k=0$ \KwTo $\infty$}{
    compute $P(x_k)$ and $r(x_k)$ in Kaczmarz cycle from $x_k$\;
    $d\gets P(x_k)-x_k$\;
    $\delta\gets\|d\|^2$\;
    \uIf{$\delta=0$}{\Return $x_k$\;}
    $\rho\gets\|r(x)\|^2$\;
    $s\gets\frac12+\frac{\rho}{2\delta}$\;
    $x_{k+1}\gets x_k+sd$\;
}
\end{algorithm}

It is a straight-forward consequence of Theorem \ref{greedy} 
that Algorithm \ref{greedy:line:search} is safe to use.

\begin{corollary}
Either Algorithm \ref{greedy:line:search} terminates in finite 
time and returns an iterate $x_k\in A^{-1}b$, or it generates
a well-defined sequence $(x_k)_k$ that has the property \eqref{better:than:K} 
and, for all $x^*\in A^{-1}b$, satisfies the identities
\begin{align}
&x_{k+1}=\argmin_{\xi\in\aff(x_k,P(x_k))}\|\xi-x^*\|^2,\label{ls:opt}\\
&\|x_k-x^*\|^2-\|x_{k+1}-x^*\|^2
=\frac{(\|r(x_k)\|^2+\|P(x_k)-x_k\|^2)^2}{4\|P(x_k)-x_k\|^2}.\label{concrete:decay:ls}
\end{align}
\end{corollary}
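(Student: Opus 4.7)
The plan is to reduce the corollary to a direct application of Theorem \ref{greedy} and Lemma \ref{res:equiv}, with one small but important observation regarding the independence of the optimal step size from the choice of solution $x^*$.

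First, I would address the termination case. The algorithm exits exactly when $\delta = \|P(x_k) - x_k\|^2 = 0$, i.e., $P(x_k) = x_k$, which by the equivalence (b) $\Leftrightarrow$ (a) in Lemma \ref{res:equiv} implies $x_k \in A^{-1}b$; so finite termination does indeed return a solution.

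Next, assuming the algorithm does not terminate, every iterate satisfies $P(x_k) \neq x_k$, and Theorem \ref{greedy} applies with $x = x_k$. The key observation is that the optimal step size $s^* = \tfrac{1}{2} + \|r(x_k)\|^2 / (2\|P(x_k)-x_k\|^2)$ delivered by Theorem \ref{greedy} depends only on quantities computed at runtime and not on the choice of $x^* \in A^{-1}b$. Hence the same point $x_{k+1} = (1-s^*)x_k + s^* P(x_k)$ minimizes $\|\xi - x^*\|^2$ over $\aff(x_k, P(x_k))$ simultaneously for every solution $x^*$, which gives \eqref{ls:opt}. Identity \eqref{concrete:decay:ls} is then the second formula of Theorem \ref{greedy} specialized to $x = x_k$.

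Finally, property \eqref{better:than:K} follows at once from \eqref{ls:opt}: since $P(x_k)$ itself lies in the affine hull $\aff(x_k, P(x_k))$, the minimum value $\|x_{k+1} - x^*\|^2$ cannot exceed the particular value $\|P(x_k) - x^*\|^2$. No single step of this plan is a genuine obstacle; the only subtlety worth flagging is the $x^*$-independence of $s^*$, since it is precisely this feature that upgrades the Gearhart--Koshy/Tam step from a formal one-dimensional minimizer to a best approximation in $\aff(x_k, P(x_k))$ of \emph{every} solution to the system, which is what \eqref{ls:opt} asserts.
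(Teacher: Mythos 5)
Your proposal is correct and follows essentially the same route as the paper: termination is handled via the stopping criterion together with Lemma \ref{res:equiv}, the identities \eqref{ls:opt} and \eqref{concrete:decay:ls} are read off from Theorem \ref{greedy}, and \eqref{better:than:K} follows because $P(x_k)\in\aff(x_k,P(x_k))$. The one point you make explicit that the paper leaves implicit --- that $s^*$ is independent of the choice of $x^*\in A^{-1}b$, so a single iterate is simultaneously the best approximation of every solution --- is a worthwhile clarification but not a different argument.
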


\begin{proof}
If Algorithm \ref{greedy:line:search} terminates after $k\in\N$ steps 
and returns an iterate $x_k\in\R^n$, then the stopping criterion implies
$P(x_k)=x_k$, and Lemma \ref{res:equiv} yields $Ax_k=b$.
Otherwise, all expressions in Algorithm \ref{greedy:line:search}
are well-defined.
By Theorem \ref{greedy}, formulas \eqref{ls:opt} and \eqref{concrete:decay:ls} hold,
and formula \eqref{ls:opt} implies \eqref{better:than:K}.
\end{proof}

\section{Acceleration by affine search}

It is possible to extend the above line-search to a search in an affine subspace
spanned by several previous iterates and the latest Kaczmarz cycle,
which improves the local error reduction.
Again, all required quantities and the exact reduction are computable at runtime.

\medskip

We begin by proving a simple geometric observation that will give meaning
to the stopping criterion of the accelerated iteration.

\begin{lemma} \label{no:stop}
Let $x^*\in A^{-1}b$, and let 
$x_1,\ldots,x_{\ell}\in\R^n$ be points such that the condition
\begin{equation}\label{last:opt}
x_\ell=\argmin_{\xi\in\aff(x_1,\ldots,x_\ell)}\|\xi-x^*\|^2
\end{equation}
holds.
If we have $P(x_\ell)\in\aff(x_1,\ldots,x_\ell)$, then we also have 
$\|r(x_\ell)\|^2=0$, $P(x_\ell)=x_\ell$ and $Ax_\ell=b$.
\end{lemma}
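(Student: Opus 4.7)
The plan is to convert the assumption $P(x_\ell)\in\aff(x_1,\ldots,x_\ell)$ into an orthogonality relation via the optimality condition \eqref{last:opt}, and then plug that relation into identity \eqref{imp:2} from Lemma \ref{improvement} to force both nonnegative summands on the left-hand side to vanish.

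First, since $x_\ell$ lies in $\aff(x_1,\ldots,x_\ell)$ and so does $P(x_\ell)$ by hypothesis, the entire line through $x_\ell$ and $P(x_\ell)$ is contained in the affine hull. Hence the univariate function $s\mapsto\|(1-s)x_\ell+sP(x_\ell)-x^*\|^2$ attains its minimum over $s\in\R$ at $s=0$ (otherwise a point on this line would beat $x_\ell$ in the affine hull, contradicting \eqref{last:opt}). The first-order condition at $s=0$ reads
\[
(x_\ell-x^*)^T(P(x_\ell)-x_\ell)=0.
\]

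Next I would substitute this into identity \eqref{imp:2} applied at $x=x_\ell$, which yields
\[
\|r(x_\ell)\|^2+\|P(x_\ell)-x_\ell\|^2=2(x_\ell-x^*)^T(x_\ell-P(x_\ell))=0.
\]
Both terms on the left are nonnegative, so each must vanish. In particular $\|r(x_\ell)\|^2=0$ and $P(x_\ell)=x_\ell$, and then $Ax_\ell=b$ follows from Lemma \ref{res:equiv}.

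The only delicate point is the orthogonality derivation in the first step: one must notice that the hypothesis $P(x_\ell)\in\aff(x_1,\ldots,x_\ell)$ turns a global minimization over the entire affine hull into a one-dimensional minimization containing the direction $P(x_\ell)-x_\ell$, which is exactly the direction appearing in \eqref{imp:2}. After that observation, everything reduces to Lemma \ref{improvement} and Lemma \ref{res:equiv} with no further computation.
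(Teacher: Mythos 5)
Your proof is correct, but it exploits the hypotheses through a different mechanism than the paper. The paper's argument is a zeroth-order value comparison using identity \eqref{imp:1}: since $P(x_\ell)$ lies in $\aff(x_1,\ldots,x_\ell)$, the optimality \eqref{last:opt} forces $\|P(x_\ell)-x^*\|^2\ge\|x_\ell-x^*\|^2$, while \eqref{imp:1} gives $\|P(x_\ell)-x^*\|^2=\|x_\ell-x^*\|^2-\|r(x_\ell)\|^2$, so $\|r(x_\ell)\|^2=0$ immediately and Lemma \ref{res:equiv} supplies the remaining conclusions. You instead extract the first-order condition $(x_\ell-x^*)^T(P(x_\ell)-x_\ell)=0$ from the one-dimensional restriction of \eqref{last:opt} to the line through $x_\ell$ and $P(x_\ell)$, and feed it into identity \eqref{imp:2}, which makes the sum of two nonnegative terms vanish. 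Your route is slightly longer (it needs the derivative argument, and one should note that the degenerate case $P(x_\ell)=x_\ell$ makes that step vacuous rather than problematic), but it yields $P(x_\ell)=x_\ell$ directly from $\|P(x_\ell)-x_\ell\|^2=0$ rather than routing it back through Lemma \ref{res:equiv}, and it is essentially the same computation that underlies Theorem \ref{greedy} and equation \eqref{not:ortho} in Theorem \ref{affine:search:thm}, so it connects the lemma more visibly to the rest of the paper. Both arguments are sound; the paper's is the more economical of the two.
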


\begin{proof}
By statement \eqref{imp:1}, we have
\[\|P(x_\ell)-x^*\|^2=\|x_\ell-x^*\|^2-\|r(x_\ell)\|^2.\]
If we have $P(x_\ell)\in\aff(x_1,\ldots,x_\ell)$, then condition \eqref{last:opt}
yields $\|r(x_\ell)\|^2=0$, and Lemma \ref{res:equiv} implies that $Ax_\ell=b$.

\end{proof}

The following result provides a characterization of the minimizer 
\[\argmin_{\xi\in\aff(x_1,\ldots,x_\ell,P(x_\ell))}\|\xi-x^*\|^2\]
that does not use the unknown solution $x^*$ explicitly.
We formulate and prove this theorem for vectors indexed $x_1,\ldots,x_\ell$
to keep the notation simple, and we will use it later (see Corollary \ref{alg:2:cor})
for a varying number of vectors and varying indexation.

\begin{theorem} \label{affine:search:thm}
Let $x^*\in A^{-1}b$, let $x_1,\ldots,x_{\ell}\in\R^n$ 
be affinely independent points with \eqref{last:opt}
and $P(x_\ell)\notin\aff(x_1,\ldots,x_\ell)$.
Consider the matrices
\[V:=(x_1-x_\ell,\ldots,x_{\ell-1}-x_\ell)\in\R^{n\times(\ell-1)},\quad
M:=(V,P(x_\ell)-x_\ell)\in\R^{n\times\ell}\]
and define
\[\gamma:=\frac12(\|r(x_\ell)\|^2+\|P(x_\ell)-x_\ell\|^2).\]
Then the minimizer
\begin{equation} \label{min:s}
s^*:=\argmin_{s\in\R^\ell}\|x_\ell+Ms-x^*\|^2
\end{equation}
is the unique solution of the linear system
\begin{equation}\label{lgs}
M^TMs=\gamma e_\ell^\ell,
\end{equation}
where $e_\ell^\ell\in\R^\ell$ is the $\ell$-th unit vector, and we have 
\begin{equation}\label{better}
\|x_\ell-x^*\|^2-\|x_\ell+Ms^*-x^*\|^2
=\gamma s_\ell^*
=\gamma^2\frac{\det(V^TV)}{\det(M^TM)}.
\end{equation}
\end{theorem}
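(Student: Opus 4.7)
The plan is to attack \eqref{min:s} via the standard normal equations
$M^T M s = M^T(x^*-x_\ell)$
and to identify the right-hand side explicitly. The affine independence of $x_1,\ldots,x_\ell$ gives that $V$ has full column rank $\ell-1$, and the hypothesis $P(x_\ell)\notin\aff(x_1,\ldots,x_\ell)$ says precisely that $P(x_\ell)-x_\ell$ lies outside $\mathrm{range}(V)$. Hence $M\in\R^{n\times\ell}$ has full column rank, so $M^T M$ is invertible and $s^*$ is the unique solution of the normal equations. Establishing \eqref{lgs} therefore reduces to showing that $M^T(x^*-x_\ell)=\gamma e_\ell^\ell$.

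Next I would compute $M^T(x^*-x_\ell)$ componentwise. For $i=1,\ldots,\ell-1$, the $i$-th entry is $(x_i-x_\ell)^T(x^*-x_\ell)$; the optimality condition \eqref{last:opt} forces $x^*-x_\ell$ to be orthogonal to the direction space of $\aff(x_1,\ldots,x_\ell)$, which coincides with $\mathrm{range}(V)$, so all these $\ell-1$ entries vanish. The last entry is $(P(x_\ell)-x_\ell)^T(x^*-x_\ell)$, and identity \eqref{imp:2} from Lemma \ref{improvement} applied at $x_\ell$ yields exactly $\tfrac12(\|r(x_\ell)\|^2+\|P(x_\ell)-x_\ell\|^2)=\gamma$. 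Assembling the two pieces produces the linear system \eqref{lgs}.

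The two identities in \eqref{better} now follow almost mechanically. Expanding $\|x_\ell+Ms^*-x^*\|^2$, substituting $M^T(x^*-x_\ell)=\gamma e_\ell^\ell$ and $(s^*)^T M^T M s^* = \gamma s_\ell^*$, collapses the difference $\|x_\ell-x^*\|^2-\|x_\ell+Ms^*-x^*\|^2$ to $2\gamma s_\ell^*-\gamma s_\ell^* = \gamma s_\ell^*$. For the determinantal form I would apply Cramer's rule to \eqref{lgs}: one obtains $s_\ell^*=\gamma\det(\widetilde M)/\det(M^T M)$, where $\widetilde M$ denotes $M^T M$ with its $\ell$-th column replaced by $e_\ell^\ell$. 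Cofactor expansion of $\det(\widetilde M)$ along that column leaves only the $(\ell,\ell)$-minor, and this minor is exactly the upper-left $(\ell-1)\times(\ell-1)$ block of $M^T M$, which equals $V^T V$. Thus $\det(\widetilde M)=\det(V^T V)$, giving the claimed formula.

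The only conceptual step, and what I would flag as the main obstacle, is recognizing the correct pairing of earlier results to evaluate $M^T(x^*-x_\ell)$ without referring to $x^*$: the optimality condition \eqref{last:opt} kills the first $\ell-1$ rows and identity \eqref{imp:2} handles the last, producing the known right-hand side $\gamma e_\ell^\ell$. Once this observation is in place, everything else is routine linear algebra.
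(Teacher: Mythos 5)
Your proposal is correct and follows essentially the same route as the paper: the paper also arrives at the system $M^TMs=\gamma e_\ell^\ell$ by showing $M^T(x^*-x_\ell)=\gamma e_\ell^\ell$ (the first $\ell-1$ components vanishing by \eqref{last:opt}, the last equalling $\gamma$ by \eqref{imp:2}), and then uses Cramer's rule with cofactor expansion along the last column together with $\|Ms^*\|^2=\gamma s_\ell^*$ to get \eqref{better}. The only cosmetic difference is that the paper derives the normal equations by differentiating the strictly convex quadratic $g(s)=\|x_\ell+Ms-x^*\|^2$ rather than citing them as standard.
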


\begin{remark}
For an interpretation of the identity \eqref{better}, it is 
instructive to have a look at the case $\ell=2$.
Elementary computations show that whenever $P(x_2)\notin\aff(x_1,x_2)$,
the minimizer
\[s^*:=\argmin_{s\in\R^2}\|x_2+s_1(x_1-x_2)+s_2(P(x_2)-x_2)-x^*\|^2\]
satisfies
\begin{align*}
&\|x_2-x^*\|^2-\|x_2+s_1^*(x_1-x_2)+s_2^*(P(x_2)-x_2)-x^*\|^2\\
&=(1-\cos^2\angle(x_1-x_2,P(x_2)-x_2))^{-1}
\frac{(\|r(x_2)\|^2+\|P(x_2)-x_2\|^2)^2}{4\|P(x_2)-x_2\|^2}.
\end{align*}
Comparing with Theorem \ref{greedy}, we see that the planar search 
outperforms the line-search by a factor $(1-\cos^2\angle(x_1-x_2,P(x_2)-x_2))^{-1}$.
\end{remark}

\begin{proof}
Since the vectors $x_1,\ldots,x_\ell,P(x_\ell)$ are affinely independent,
the Gram\-ian matrices $V^TV$ and $M^TM$ are positive definite.
The first derivatives of the strictly convex quadratic function
\[g(s):=\|x_\ell+Ms-x^*\|^2\]
are given by
\begin{align*}
\frac{dg}{ds_j}(s)&=2\Big(x_\ell+Ms-x^*\Big)^T(x_j-x_\ell),\quad
j=1,\ldots,\ell-1,\\
\frac{dg}{ds_\ell}(s)&=2\Big(x_\ell+Ms-x^*\Big)^T(P(x_\ell)-x_\ell).
\end{align*}
Using statements \eqref{imp:2} and \eqref{last:opt}, we see that the unique minimizer
$s^*$ of $g$ solves the linear equations
\begin{align}
&(x_j-x_\ell)^TMs=\langle x^*-x_\ell,x_j-x_\ell\rangle=0,\quad
j=1,\ldots,\ell-1,\label{ortho}\\
&(P(x_\ell)-x_\ell)^TMs=\langle x^*-x_\ell,P(x_\ell)-x_\ell\rangle=\gamma,
\label{not:ortho}
\end{align}
which are subsumed in the linear system \eqref{lgs}.
Using Cramer's rule, we can express
\[s_\ell^*
=\frac{\det\begin{psmallmatrix}
\langle x_1-x_\ell,x_1-x_\ell\rangle&\ldots&\langle x_1-x_\ell,x_{\ell-1}-x_\ell\rangle&0\\
\vdots&&\vdots&\vdots\\
\langle x_{\ell-1}-x_\ell,x_1-x_\ell\rangle&\ldots&\langle x_{\ell-1}-x_\ell,x_{\ell-1}-x_\ell\rangle&0\\
\langle P(x_\ell)-x_\ell,x_1-x_\ell\rangle&\ldots&\langle P(x_\ell)-x_\ell,x_{\ell-1}-x_\ell\rangle&\gamma
\end{psmallmatrix}}
{\det(M^TM)}
=\gamma\frac{\det(V^TV)}{\det(M^TM)}.\]
From system \eqref{lgs}, we infer
\[\|Ms^*\|^2=(s^*)^TM^TMs^*=\gamma s_\ell^*,\]
and together with statements \eqref{ortho} and \eqref{not:ortho}, we conclude that
\begin{align*}
g(s^*)&=\|x_\ell+Ms^*-x^*\|^2
=\|x_\ell-x^*\|^2+2(s^*)^TM^T(x_\ell-x^*)+\|Ms^*\|^2\\
&=\|x_\ell-x^*\|^2+2s_\ell^*(P(x_\ell)-x_\ell)^T(x_\ell-x^*)+\gamma s_\ell^*
=\|x_\ell-x^*\|^2-\gamma s_\ell^*.
\end{align*}
\end{proof}

Algorithm \ref{basic:affine:search} uses Theorem \ref{affine:search:thm}
after every step to reduce the size of the error $\|x_{k+1}-x^*\|^2$
in statement \eqref{better:than:K}.

\begin{remark}\label{rem:affine}
a) The parameter $\ell\in\N_1$ in Algorithm \ref{basic:affine:search} controls 
how many of the previous iterates are used to span the affine search space.
When $\ell=1$, then Algorithm \ref{basic:affine:search} reduces to
Algorithm \ref{greedy:line:search}.
When $\ell>1$, the algorithm has a startup phase in which it grows the affine
basis of the search space, so that
\[V_0=[\ ],\quad
V_1=(x_0-x_1),\quad\ldots,\quad
V_{\ell-1}=(x_0-x_{\ell-1},\ldots,x_{\ell-2}-x_{\ell-1}).\]
After the startup phase, the algorithm keeps the latest $\ell$ iterates
and discards $x_{k-\ell}$, which gives
\[V_\ell=(x_1-x_\ell,\ldots,x_{\ell-1}-x_\ell),\quad
V_{\ell+1}=(x_2-x_{\ell+1},\ldots,x_\ell-x_{\ell+1}),\quad\ldots\]
When $\ell\ge n$, then the proof of Theorem \ref{alg:2:cor} reveals that
Algorithm \ref{basic:affine:search} terminates with an iterate $x_k\in A^{-1}b$, 
$k\le n$.

\medskip

b) The computational complexity of one step of Algorithm \ref{basic:affine:search} 
is composed in the following way:
\begin{enumerate}[(i)]
\item The Kaczmarz cycle requires $4\nnz(A)+m$ flops.
\item The computation of $d_k$, $\delta_k$ and $\rho_k$ requires $3n+2m$ flops.
\item Assembling $M$ requires $(k-j_k)n$ flops ($d_k$ is known).
\item Computing $M^TM$ requires $\frac12(k-j_k+1)(k-j_k+2)n$ flops.
\item Solving system \eqref{lgs} via LU factorization and forward and 
backward substitution requires 
$\frac23(k-j_k)^3+\frac72(k-j_k)^2+\frac56(k-j_k)$ flops.
\item Updating $x_k$ requires $2(k-j_k+1)n$ flops.
\end{enumerate}
After the startup phase, we have $k-j_k+1=\ell$, which gives a total computational complexity of roughly 
\[4\nnz(A)+(3+3\ell+\frac12\ell^2)n+3m+\ell^3.\]
The acceleration comes at a considerable cost, mostly caused by the assembly
and by solving system \eqref{lgs}, which is not desirable.
\end{remark}

\begin{algorithm}\label{basic:affine:search}
\caption{Kaczmarz method with affine search\\
(for complexity see Remark \ref{rem:affine} part b)}
\KwIn{
    $A\in\R^{m\times n}$,
    $b\in\R^m$,
    $x_0\in\R^n$,
    $\ell\in\N_1$    
}
\For{$k=0$ \KwTo $\infty$}{
    compute $P(x_k)$ and $r(x_k)$ in Kaczmarz cycle from $x_k$\;
    $d_k\gets P(x_k)-x_k$\;
    $\delta_k\gets\|d_k\|^2$\;
    \uIf{$\delta=0$}{\Return $x_k$\;}
    $\rho_k\gets\|r(x_k)\|^2$\;
    $\gamma_k\gets\frac12(\rho_k+\delta_k)$\;
    $j_k\gets\max\{k-\ell+1,0\}$\;
    $V_k\gets(x_{j_k}-x_k,\ldots,x_{k-1}-x_k)\in\R^{n\times(k-j_k)}$\;
    assemble $M_k^TM_k$ from $M_k=(V_k,d_k)\R^{n\times(k-j_k+1)}$\;
    solve $M_k^TM_ks_k=\gamma_k e_{k-j_k+1}^{k-j_k+1}$ for $s_k$\;
    $x_{k+1}\gets x_k+M_ks_k$\;
}
\end{algorithm}

\begin{theorem} \label{alg:2:cor}
Let $x^*\in A^{-1}b$.
Either Algorithm \ref{basic:affine:search} terminates and returns an iterate 
$x_k\in A^{-1}b$, or it generates
a well-defined sequence $(x_k)_k$ that satisfies
\begin{align}
&x_{k+1}=\argmin_{\xi\in\aff(x_{j_k},\ldots,x_k,P(x_k))}\|\xi-x^*\|^2,\label{cor:a}\\
&\|x_k-x^*\|^2-\|x_{k+1}-x^*\|^2=\gamma_k^2\frac{\det(V_k^TV_k)}{\det(M_k^TM_k)}
\label{cor:b}
\end{align}
for all $k\in\N$. 
In particular, it has the property \eqref{better:than:K}.
\end{theorem}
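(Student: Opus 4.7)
The proof splits into a termination case and an inductive argument. If Algorithm \ref{basic:affine:search} exits at step $k$, the stopping condition $\delta_k=0$ means $P(x_k)=x_k$, and Lemma \ref{res:equiv} gives $Ax_k=b$. Otherwise, all the substance lies in showing that, as long as the algorithm keeps running, each update is an instance of Theorem \ref{affine:search:thm} with $x_{j_k},\ldots,x_k$ playing the role of the abstract points $x_1,\ldots,x_\ell$ in that theorem. Granted this, \eqref{cor:a} is exactly the characterization \eqref{min:s} translated through the change of variables $\xi=x_k+M_ks$; \eqref{cor:b} is the determinantal identity \eqref{better}; and \eqref{better:than:K} follows immediately from \eqref{cor:a} because $P(x_k)$ lies in the affine hull over which $x_{k+1}$ minimizes.

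I would set up an induction on $k$. The base case $k=0$ is essentially vacuous: $V_0$ has no columns, $\aff(x_0)=\{x_0\}$ trivially satisfies \eqref{last:opt}, and $P(x_0)\notin\{x_0\}$ is simply the negation of the stopping criterion. For the inductive step I assume that Theorem \ref{affine:search:thm} has been applied successfully at step $k-1$, so that the points $x_{j_{k-1}},\ldots,x_{k-1},P(x_{k-1})$ are affinely independent and $x_k=\argmin_{\xi\in\aff(x_{j_{k-1}},\ldots,x_{k-1},P(x_{k-1}))}\|\xi-x^*\|^2$. The task is then to verify, at step $k$, the three hypotheses of Theorem \ref{affine:search:thm}: (a) optimality of $x_k$ over $\aff(x_{j_k},\ldots,x_k)$; (b) $P(x_k)\notin\aff(x_{j_k},\ldots,x_k)$; (c) affine independence of $x_{j_k},\ldots,x_k$.

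Parts (a) and (b) are short. Since $j_k\in\{j_{k-1},j_{k-1}+1\}$ and $x_k\in\aff(x_{j_{k-1}},\ldots,x_{k-1},P(x_{k-1}))$ by construction, the hull $\aff(x_{j_k},\ldots,x_k)$ is contained in that of the previous step, and the minimality of $x_k$ transfers downward by monotonicity. With (a) in hand, Lemma \ref{no:stop} becomes applicable: $P(x_k)\in\aff(x_{j_k},\ldots,x_k)$ would force $P(x_k)=x_k$ and contradict $\delta_k\neq 0$, so (b) also holds.

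The main obstacle is (c). The plan is to read off from the closed form \eqref{better} applied at step $k-1$ that the last component of $s_{k-1}$ equals $\gamma_{k-1}\det(V_{k-1}^TV_{k-1})/\det(M_{k-1}^TM_{k-1})$, which is strictly positive because $\gamma_{k-1}=\tfrac12(\rho_{k-1}+\delta_{k-1})>0$ whenever $\delta_{k-1}>0$. Consequently, the update $x_k-x_{k-1}=M_{k-1}s_{k-1}$ carries a nonzero coefficient on $P(x_{k-1})-x_{k-1}$, so $\aff(x_{j_{k-1}},\ldots,x_{k-1},x_k)=\aff(x_{j_{k-1}},\ldots,x_{k-1},P(x_{k-1}))$; the latter is affinely independent by the inductive hypothesis, and restricting to the indices $j_k,\ldots,k$ preserves affine independence. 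With (a)--(c) established, Theorem \ref{affine:search:thm} closes the inductive step and simultaneously supplies \eqref{cor:a} and \eqref{cor:b}.
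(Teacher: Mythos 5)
Your proposal is correct and follows essentially the same route as the paper's own proof: an induction maintaining affine independence, optimality of $x_k$ over $\aff(x_{j_k},\ldots,x_k)$, and $P(x_k)\notin\aff(x_{j_k},\ldots,x_k)$ (via Lemma \ref{no:stop}), with the nonvanishing of the last component of $s^*$ from \eqref{better} securing affine independence at the next step. Your treatment of hypothesis (c) is only slightly more explicit than the paper's, but the argument is the same.
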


\begin{proof}
We prove by induction that Algorithm \ref{basic:affine:search} either
returns a solution in finite time or generates a sequence such that 
identities \eqref{cor:a} and \eqref{cor:b} and the following statements 
hold for every $k\in\N$:
\begin{itemize}
\item [a)] The vectors $x_{j_k},\ldots,x_k$ are affinely independent.
\item [b)] We have $x_k=\argmin_{\xi\in\aff(x_{j_k},\ldots,x_k)}\|\xi-x^*\|^2$.
\item [c)] We have $P(x_k)\notin\aff(x_{j_k},\ldots,x_k)$.
\end{itemize}
If $k=0$, then properties a) and b) are trivially satisfied.

\medskip

Now assume that Algorithm \ref{basic:affine:search} has generated iterates 
$x_0,\ldots,x_k\in\R^n$ with properties a), b).

\medskip

If Algorithm \ref{basic:affine:search} terminates and returns $x_k$, 
then the stopping criterion implies that $P(x_k)=x_k$, and
Lemma \ref{res:equiv} implies $Ax_k=b$.

\medskip
If Algorithm \ref{basic:affine:search} does not terminate, then we have 
$P(x_k)\neq x_k$.
Because of statement b), Lemma \ref{no:stop} implies statement c).
Since statements a), b) and c) hold for $k$, the vectors $x_{j_k},\ldots,x_k$
satisfy all assumptions of Theorem \ref{affine:search:thm}.
The linear system \eqref{lgs} with the matrix
\[M=(x_{j_k}-x_k,\ldots,x_{k-1}-x_k,P(x_k)-x_k)\]
possesses a unique solution $s^*\in\R^{k-{j_k}+1}$, and the iterate
$x_{k+1}:=x_k+Ms^*$ is well-defined.
Because of statements a) and c), we have 
\[\det(V^TV)>0\quad\text{and}\quad\det(M^TM)>0,\]
so by statement \eqref{better}, we have $s_{k-{j_k}+1}^*\neq 0$.
Combining this fact with statements a) and c) yields statement a) 
with $k+1$ in lieu of $k$.
Statement \eqref{min:s} implies statement \eqref{cor:a} and, since
\[\aff(x_{j_{k+1}},\ldots,x_k,x_{k+1})\subset\aff(x_{j_k},\ldots,x_k,P(x_k)),\] 
also statement b) for $k+1$ in lieu of $k$.
In addition, statement \eqref{better} implies the identity \eqref{cor:b}.

\bigskip

\end{proof}

\section{Efficient updating}

The goal of this section is to simplify the solution of the linear system \eqref{lgs},
which must be solved after every Kaczmarz cycle to determine $x_{k+1}$ from
the previous iterates and the vector $P(x_k)$.

\medskip

Our first result shows that updating the submatrix $V^TV$ of the matrix
$M^TM$ from one iteration to another is straight-forward.

\begin{lemma} \label{update}
In the situation of Theorem \ref{affine:search:thm}, 
and denoting $x_{\ell+1}:=x_\ell+Ms^*$, we have
\begin{align*}
&\langle x_i-x_{\ell+1},x_j-x_{\ell+1}\rangle=(V^TV)_{ij}+\gamma s_\ell^*,
&&1\le i,j\le\ell-1,\\
&\langle x_i-x_{\ell+1},x_{\ell}-x_{\ell+1}\rangle=\gamma s_\ell^*,
&&1\le i\le\ell.
\end{align*}
\end{lemma}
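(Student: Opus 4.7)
The plan is to rewrite each inner product in terms of the update vector $Ms^*$, using $x_{\ell+1}-x_\ell = Ms^*$, and then to exploit two facts that were already established inside the proof of Theorem \ref{affine:search:thm}: first, the orthogonality relations \eqref{ortho}, which say that $(x_j-x_\ell)^T M s^* = 0$ for $j=1,\ldots,\ell-1$; and second, the identity $\|Ms^*\|^2 = \gamma s_\ell^*$, which was derived from $(s^*)^T M^T M s^* = \gamma s_\ell^*$. Nothing else should be needed.

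For the first identity, with $1\le i,j\le\ell-1$, I would write $x_i-x_{\ell+1}=(x_i-x_\ell)-Ms^*$ and similarly for $j$, then bilinearly expand into the four terms
$$\langle x_i-x_\ell,x_j-x_\ell\rangle-\langle x_i-x_\ell,Ms^*\rangle-\langle Ms^*,x_j-x_\ell\rangle+\|Ms^*\|^2.$$
The first term is exactly $(V^TV)_{ij}$; the two cross terms vanish by \eqref{ortho} since $i,j\le\ell-1$; and the last term equals $\gamma s_\ell^*$. This gives the claimed formula.

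For the second identity, I would use $x_\ell-x_{\ell+1}=-Ms^*$ to obtain
$$\langle x_i-x_{\ell+1},x_\ell-x_{\ell+1}\rangle = -\langle x_i-x_\ell,Ms^*\rangle+\|Ms^*\|^2.$$
When $i\le\ell-1$ the first term is zero by \eqref{ortho}, and when $i=\ell$ it is trivially zero, so in both cases the right-hand side collapses to $\|Ms^*\|^2 = \gamma s_\ell^*$. The whole argument is a routine bilinear expansion, so I do not anticipate any real obstacle; the only point to be careful about is making sure the orthogonality \eqref{ortho} is invoked with the correct index range and recognising that the $i=\ell$ case of the second identity does not need \eqref{ortho} at all.
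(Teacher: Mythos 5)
Your proof is correct and is essentially the paper's own argument: the paper also expands $x_i-x_{\ell+1}=(x_i-x_\ell)-Ms^*$ bilinearly, kills the cross terms using the first $\ell-1$ rows of the system $M^TMs^*=\gamma e_\ell^\ell$ (which are exactly the orthogonality relations \eqref{ortho} you cite), and identifies $\|Ms^*\|^2=(s^*)^TM^TMs^*=\gamma s_\ell^*$. The only cosmetic difference is that the paper writes $x_i-x_\ell=Me_i$ and works with $e_i^TM^TMs^*$ directly rather than invoking \eqref{ortho} by name.
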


\begin{proof}
We can express
\[x_i-x_\ell=Me_i,\quad i=1,\ldots,\ell-1.\]
For $1\le i,j\le\ell-1$, we use the identity \eqref{lgs} to obtain
\begin{align*}
&\langle x_i-x_{\ell+1},x_j-x_{\ell+1}\rangle
=\langle x_i-x_\ell-Ms^*,x_j-x_\ell-Ms^*\rangle\\
&=\langle Me_i-Ms^*,Me_j-Ms^*\rangle\\
&=e_i^TM^TMe_j-e_i^TM^TMs^*-e_j^TM^TMs^*+(s^*)^TM^TMs^*\\
&=(M^TM)_{ij}+\gamma s_\ell^*
=(V^TV)_{ij}+\gamma s_\ell^*
\end{align*}
For $1\le i<\ell$, we compute
\begin{align*}
&\langle x_i-x_{\ell+1},x_\ell-x_{\ell+1}\rangle
=\langle x_i-x_\ell-Ms^*,-Ms^*\rangle\\
&=\langle Me_i-Ms^*,-Ms^*\rangle
=-e_i^TM^TMs^*+(s^*)^TM^TMs^*=\gamma s_\ell^*,
\end{align*}
and we also obtain
\[\langle x_\ell-x_{\ell+1},x_\ell-x_{\ell+1}\rangle
=(s^*)^TM^TMs^*
=\gamma s_\ell^*.\]
\end{proof}

We will see (in the proof of Theorem \ref{algo:3:works}) that the matrices $V^TV$
generated by Algorithms \ref{basic:affine:search} and \ref{greedy:affine:search} 
have the structure of the matrix $B$ defined below with known coefficients $\alpha_j$.

\begin{lemma} \label{ringed}
Let $\alpha\in\R^n$, and let 
$f_1^n,\ldots,f_n^n\in\R^n$ be given by $f_j^n=\sum_{i=1}^je_i^n$,
where $e_i^n\in\R^n$ denotes the $i$-th unit vector.
Then the matrix
\[B:=\sum_{j=1}^n\alpha_jf_j^n(f_j^n)^T\]
has the structure
\[B=\begin{pmatrix}
\sum_{i=1}^n\alpha_i&\sum_{i=2}^n\alpha_i&\cdots&\sum_{i=n}^n\alpha_i\\
\sum_{i=2}^n\alpha_i&\sum_{i=2}^n\alpha_i&&\vdots\\
\vdots&&\ddots&\vdots\\
\sum_{i=n}^n\alpha_i&\cdots&\cdots&\sum_{i=n}^n\alpha_i
\end{pmatrix}.\]
If $\alpha_j\neq 0$ for $j=1,\ldots,n$, then
\[C:=\begin{pmatrix}
\alpha_1^{-1}&-\alpha_1^{-1}\\
-\alpha_1^{-1}&\alpha_1^{-1}+\alpha_2^{-1}&-\alpha_2^{-1}\\
&-\alpha_2^{-1}&\alpha_2^{-1}+\alpha_3^{-1}&-\alpha_3^{-1}\\
&&\ddots&\ddots&\ddots\\
&&&-\alpha_{n-2}^{-1}&\alpha_{n-2}^{-1}+\alpha_{n-1}^{-1}&-\alpha_{n-1}^{-1}\\
&&&&-\alpha_{n-1}^{-1}&\alpha_{n-1}^{-1}+\alpha_{n}^{-1}
\end{pmatrix}\]
is the inverse of the matrix $B$.
\end{lemma}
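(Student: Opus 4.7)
The plan is to exhibit an explicit factorization $B=UDU^T$ from which both the structural formula and the inverse follow by elementary matrix algebra.

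First I would introduce the upper-triangular matrix $U\in\R^{n\times n}$ whose $j$-th column is $f_j^n$, so that $U_{ij}=1$ for $i\le j$ and $U_{ij}=0$ otherwise. Setting $D:=\mathrm{diag}(\alpha_1,\ldots,\alpha_n)$ and noting that $Ue_j^n=f_j^n$, the defining sum for $B$ becomes
\[
B=\sum_{j=1}^n \alpha_j (Ue_j^n)(Ue_j^n)^T
=U\Big(\sum_{j=1}^n \alpha_j e_j^n(e_j^n)^T\Big)U^T
=UDU^T.
\]
Reading off $B_{ij}=\sum_k \alpha_k U_{ik}U_{jk}$ and noting that $U_{ik}U_{jk}\neq 0$ precisely when $k\ge\max(i,j)$ immediately gives the stated entrywise description of $B$.

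For the inverse I would write down $U^{-1}$ explicitly: since $U$ is upper triangular with unit entries on and above the diagonal, its inverse is the bidiagonal matrix $T$ with $T_{ii}=1$ and $T_{i,i+1}=-1$, and zeros elsewhere, which is confirmed by a one-line telescoping computation of $UT$. Consequently $B^{-1}=U^{-T}D^{-1}U^{-1}=T^TD^{-1}T$, and computing this triple product entrywise is short because each row of $T^T$ and each column of $T$ has at most two nonzero entries. The nonzero contributions to $(T^TD^{-1}T)_{ij}$ come only from $k\in\{i-1,i\}$, and collecting them reproduces the tridiagonal matrix $C$ of the lemma: the $(1,1)$ entry is $\alpha_1^{-1}$, the $(i,i)$ entry for $2\le i\le n$ is $\alpha_{i-1}^{-1}+\alpha_i^{-1}$, and the entries at $(i,i+1)$ and $(i+1,i)$ are both $-\alpha_i^{-1}$.

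There is no serious obstacle once the factorization $B=UDU^T$ is in hand; everything reduces to bookkeeping of a handful of nonzero entries in sparse factors, and the nonvanishing hypothesis on the $\alpha_j$ is used only to guarantee that $D$ is invertible. A direct alternative would be to verify $BC=I$ entry-by-entry using the explicit form of $B_{ij}$, but the telescoping cancellations in the rows of $B$ are less transparent than the factored computation, so I would favor the $UDU^T$ route.
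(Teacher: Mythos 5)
Your proof is correct, but it takes a genuinely different route from the paper. The paper's own proof is a one-line direct verification: it observes that $C$ is well-defined when all $\alpha_j\neq 0$ and checks that multiplying $B$ and $C$ yields the identity, relying on telescoping cancellations among the partial sums $\sum_{i\ge k}\alpha_i$ in the rows of $B$. You instead factor $B=UDU^T$ with $U=(f_1^n,\ldots,f_n^n)$ unit upper triangular and $D=\mathrm{diag}(\alpha)$, read off $B_{ij}=\sum_{k\ge\max(i,j)}\alpha_k$ from the sparsity of $U$, invert $U$ as the bidiagonal matrix $T$, and compute $B^{-1}=T^TD^{-1}T$ entrywise; all of these steps are accurate, including the case split between the $(1,1)$ entry $\alpha_1^{-1}$ and the entries $\alpha_{i-1}^{-1}+\alpha_i^{-1}$ for $i\ge 2$. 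Your approach is longer to set up but more explanatory: it shows \emph{why} the inverse is tridiagonal (it is a product of bidiagonal factors sandwiching a diagonal one), it makes the invertibility of $B$ under the hypothesis $\alpha_j\neq 0$ transparent rather than a consequence of the verification, and as a bonus it exhibits $B$ as congruent to $D$, so $B$ is positive definite exactly when all $\alpha_j>0$ --- a fact consonant with the paper's use of this lemma for the Gramian matrices $V^TV$. The paper's direct multiplication is the more economical choice for a published proof; your factorization is the one you would want if you had to discover the formula for $C$ in the first place.
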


\begin{proof}
When $\alpha_j\neq 0$ for $j=1,\ldots,n$, then the matrix $C$ is well-defined.
Multiplying the matrices $B$ and $C$ yields the identity.
\end{proof}

In conjunction with Lemmas \ref{update} and \ref{ringed}, the next lemma shows 
that the linear system \eqref{lgs} can be solved in linear time.

\begin{lemma} \label{rank:2}
Let $B\in\R^{n\times n}$ be invertible, 
let $p\in\R^n$ and let $\delta>0$ and $\gamma\in\R$.
If the matrix 
\[G:=\begin{pmatrix}B&p\\p^T&\delta\end{pmatrix}\]
is invertible, then we have $p^TB^{-1}p\neq\delta$, and the solution 
of the linear system
\[Gx=\gamma e^n_n\]
is given by 
\[G^{-1}\gamma e^n_n
=\frac{\gamma}{p^TB^{-1}p-\delta}\begin{pmatrix}B^{-1}p\\-1\end{pmatrix}.\]
\end{lemma}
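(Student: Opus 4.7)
The plan is to prove Lemma \ref{rank:2} by direct verification, relying on the Schur-complement determinant formula only to certify that the denominator in the stated expression is nonzero.

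First I would establish the inequality $p^T B^{-1} p \neq \delta$. Since $B$ is invertible, the block factorization
\[
\begin{pmatrix} B & p \\ p^T & \delta \end{pmatrix}
= \begin{pmatrix} I & 0 \\ p^T B^{-1} & 1 \end{pmatrix}
\begin{pmatrix} B & 0 \\ 0 & \delta - p^T B^{-1} p \end{pmatrix}
\begin{pmatrix} I & B^{-1} p \\ 0 & 1 \end{pmatrix}
\]
yields $\det(G) = \det(B)\,(\delta - p^T B^{-1} p)$. The invertibility of $G$ together with $\det(B) \neq 0$ forces $\delta - p^T B^{-1} p \neq 0$, which is the first claim and also guarantees that the scalar $\gamma/(p^T B^{-1} p - \delta)$ appearing in the alleged solution formula is well defined.

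Second, I would verify the closed-form expression by plugging it into the linear system. A direct block multiplication gives
\[
\begin{pmatrix} B & p \\ p^T & \delta \end{pmatrix}
\begin{pmatrix} B^{-1} p \\ -1 \end{pmatrix}
= \begin{pmatrix} B B^{-1} p - p \\ p^T B^{-1} p - \delta \end{pmatrix}
= \begin{pmatrix} 0 \\ p^T B^{-1} p - \delta \end{pmatrix},
\]
and scaling by $\gamma/(p^T B^{-1} p - \delta)$ produces precisely $\gamma$ times the last unit vector, as required. Uniqueness follows from the invertibility of $G$.

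There is no real obstacle here; the content of the lemma is that the particular right-hand side $\gamma e$ allows the $(n+1) \times (n+1)$ system to collapse to a single application of $B^{-1}$ to $p$. The point worth flagging for later use is algorithmic rather than analytical: combined with Lemma \ref{update} (which shows how $V^T V$ evolves between iterations) and Lemma \ref{ringed} (which provides a closed-form tridiagonal inverse for the relevant class of matrices), this lemma will reduce solving system \eqref{lgs} after each Kaczmarz cycle to a linear-time operation, thereby removing the dominant cost identified in Remark \ref{rem:affine}\,(b).
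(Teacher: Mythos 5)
Your proof is correct and rests on the same key computation as the paper's: multiplying $G$ against the vector $(B^{-1}p,\,-1)^T$ to produce $(0,\,p^TB^{-1}p-\delta)^T$ and then rescaling. The only (minor) divergence is in how $p^TB^{-1}p\neq\delta$ is certified — you invoke the Schur-complement determinant factorization, whereas the paper gets it for free from the same block product, since a nonsingular $G$ applied to the nonzero vector $(B^{-1}p,\,-1)^T$ cannot yield zero; both routes are valid, yours just uses slightly heavier machinery.
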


\begin{proof}
Since $G$ is nonsingular, and since
\[\begin{pmatrix}B&p\\p^T&\delta\end{pmatrix}\begin{pmatrix}
B^{-1}p\\-1\end{pmatrix}=\begin{pmatrix}0\\p^TB^{-1}p-\delta\end{pmatrix}\]
holds, we have $p^TCp\neq\delta$, and the desired result follows.
\end{proof}

Lemmas \ref{update}, \ref{ringed} and \ref{rank:2} inspire Algorithm
\ref{greedy:affine:search}.
We require $\ell\ge 2$, because for $\ell=1$, when Algorithm \ref{basic:affine:search}
reduces to Algorithm \ref{greedy:line:search}, there is no data to be updated.
By $C(\alpha)$, we denote the matrix $C$ from Lemma \ref{ringed}
given by the parameter vector $\alpha$ and its dimension.

In the initial step of Algorithm \ref{greedy:affine:search},
the matrices $V_0$ and $C_0$ as well as the vectors $p_0$ and $q_0$
are empty and have to be ignored where they occur.
We split the solution $s_k$ of the linear system
$M_k^TM_ks_k=\gamma_ke_{k-j+1}$ into the vector $\overline{s_k}\in\R^{k-j}$ 
of the first several components and the last component $\underline{s_k}\in\R$
to exploit the structure of system \eqref{lgs}.

\begin{algorithm}\label{greedy:affine:search}
\caption{Kaczmarz method with enhanced affine search\\
(for complexity see Remark \ref{discuss:enhanced})}
\KwIn{
    $A\in\R^{m\times n}$,
    $b\in\R^m$,
    $x_0\in\R^n$,
    $\ell\in\N_2$     
}
\For{$k=0$ \KwTo $\infty$}{
    compute $P(x_k)$ and $r(x_k)$ in Kaczmarz cycle from $x_k$\;
    $d_k\gets P(x_k)-x_k$\;
    $\delta_k\gets\|d_k\|^2$\;
    \uIf{$\delta_k=0$}{\Return $x_k$\;}    
    $\rho_k\gets\|r(x_k)\|^2$\;
    $\gamma_k\gets\frac12(\rho_k+\delta_k)$\;
    $j_k\gets\max\{k-\ell+1,0\}$\;    
    $V_k\gets(x_{j_k}-x_k,\ldots,x_{k-1}-x_k)\in\R^{n\times(k-{j_k})}$\;
    $p_k\gets V_k^Td_k\in\R^{k-{j_k}}$\;
    $C_k\gets C(\gamma_{j_k}\underline{s_{j_k}},\ldots,\gamma_{k-1}\underline{s_{k-1}})
        \in\R^{(k-{j_k})\times(k-{j_k})}$\;
    $q_k\gets C_kp_k
        \in\R^{k-{j_k}}$\;
    $\underline{s_k}\gets\frac{\gamma_k}{\delta-p_k^Tq_k}\in\R$\;
    $\overline{s_k}\gets-\underline{s_k}q_k\in\R^{k-{j_k}}$\;
    $x_{k+1}\gets x_k+V_k\overline{s_k}+\underline{s_k}d_k$\;    
}
\end{algorithm}

\begin{theorem} \label{algo:3:works}
Algorithms \ref{basic:affine:search} and \ref{greedy:affine:search} generate 
identical iterations.
\end{theorem}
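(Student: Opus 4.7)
The plan is to show by induction on $k$ that, so long as neither algorithm has terminated, the solution $s_k$ of the linear system in Algorithm~\ref{basic:affine:search} agrees with the concatenation $(\overline{s_k},\underline{s_k})$ assembled in Algorithm~\ref{greedy:affine:search}. Carried along with the induction is the structural invariant that the Gramian $V_k^T V_k$ coincides with the matrix $B$ of Lemma~\ref{ringed} for the parameter vector $(\gamma_{j_k}\underline{s_{j_k}},\ldots,\gamma_{k-1}\underline{s_{k-1}})$. Once this invariant is in place, Lemma~\ref{ringed} yields $C_k=(V_k^T V_k)^{-1}$, and the block form
\[
M_k^T M_k \;=\; \begin{pmatrix} V_k^T V_k & p_k \\ p_k^T & \delta_k \end{pmatrix}
\]
combined with Lemma~\ref{rank:2} (applied with $B=V_k^T V_k$, $p=p_k$, $\delta=\delta_k$, $\gamma=\gamma_k$) reproduces exactly the closed-form assignments
\[
\underline{s_k}\;=\;\frac{\gamma_k}{\delta_k-p_k^T q_k},\qquad \overline{s_k}\;=\;-\underline{s_k}\,q_k
\]
used by Algorithm~\ref{greedy:affine:search}. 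This shows that $x_{k+1}$ coincides in both algorithms.

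The bulk of the work lies in the inductive step for the ringed invariant. I would treat the startup phase ($j_{k+1}=j_k=0$, window grows) and the steady state ($j_{k+1}=j_k+1$, window slides) in parallel, since Lemma~\ref{update} covers both. That lemma tells us that each inner product $\langle x_i-x_{k+1},x_j-x_{k+1}\rangle$ between columns surviving from $V_k$ equals the corresponding entry of $V_k^T V_k$ plus the scalar $\gamma_k\underline{s_k}$, while the new last row and column (arising from $x_k-x_{k+1}$) is identically $\gamma_k\underline{s_k}$. A direct inspection of the explicit entries of $B$ in Lemma~\ref{ringed} shows that this update is precisely the effect of appending $\gamma_k\underline{s_k}$ as a new last parameter (and, in the steady state, simultaneously dropping the oldest parameter). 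The base case $k=0$ is vacuous because $V_0$ is empty.

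The main obstacle is bookkeeping: cleanly matching the sliding offset $j_k$ on the parameter side with the re-indexing of rows and columns of $V_k^T V_k$ when the window shifts. Beyond that, two non-degeneracy points must be verified. For the inverse formula in Lemma~\ref{ringed} to apply, every parameter $\gamma_j\underline{s_j}$ must be nonzero; this follows from Theorem~\ref{alg:2:cor}, whose identity \eqref{better} forces $\underline{s_j}>0$ (since $\gamma_j>0$ and $\det(V_j^T V_j)/\det(M_j^T M_j)>0$) whenever the algorithm has not yet terminated. For $\underline{s_k}$ in Algorithm~\ref{greedy:affine:search} to be well-defined, the denominator $\delta_k-p_k^T q_k$ must be nonzero, which is exactly the conclusion of Lemma~\ref{rank:2} given that $M_k^T M_k$ is positive definite.
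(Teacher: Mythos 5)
Your proposal is correct and follows essentially the same route as the paper's proof: an induction carrying the invariant that $V_k^TV_k$ has the rank-one-sum structure of Lemma~\ref{ringed}, updated via Lemma~\ref{update} in both the growing-window and sliding-window cases, and then combined with Lemma~\ref{rank:2} to recover the closed-form assignments of Algorithm~\ref{greedy:affine:search}. Your explicit verification that the parameters $\gamma_j\underline{s_j}$ are nonzero (via identity \eqref{better}) is a point the paper leaves implicit, and is a welcome addition rather than a deviation.
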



\begin{proof}
We prove by induction that one of the following alternatives holds:
\begin{itemize}
\item [i)] Algorithms \ref{basic:affine:search} 
and \ref{greedy:affine:search} both terminate in step $k$.
\item [ii)] We have 
\[V_k^TV_k=\begin{cases}\sum_{i=1}^{k-j_k}\gamma_{j_k+i-1}\underline{s_{j_k+i-1}}f_i^{k-j_k}(f_i^{k-j_k})^T,&k>0\\\mathrm{the\ empty\ matrix}\ [\ ],&k=0,\end{cases}\]
where $f_1^{k-j_k},\ldots,f_{k-j_k}^{k-j_k}\in\R^{k-j_k}$ 
are given by $f_i^{k-j_k}=\sum_{h=1}^ie_h^{k-j_k}$,
and both algorithms compute identical $\gamma_k$, $s_k$ and $x_{k+1}$.
\end{itemize}

When $k=0$, both algorithms terminate if and only if $d_0=P(x_0)-x_0=0$.
Otherwise, both algorithms compute identical $\delta_0$, $\rho_0$ and $\gamma_0$,
and they both have $j_0=0$ and $V_0=[\ ]$.
Algorithm \ref{basic:affine:search} solves
\begin{equation}\label{l1}
\delta_0s_0=\|d_0\|^2s_0=M_0^TM_0s_0=\gamma_0,
\end{equation}
while Algorithm \ref{greedy:affine:search} has $p_0=[\ ]$, $C_0=[\ ]$ and $q_0=[\ ]$,
computes $\underline{s_0}=\delta_0^{-1}\gamma_0$ and sets $\overline{s_0}=[\ ]$.
Hence both algorithms generate identical $s_0\in\R$ and the same next iterate
\begin{equation}\label{l2}
x_1=x_0+s_0d_0\in\R^n.
\end{equation}

\medskip

Now assume that alternative ii) holds for $0,\ldots,k$.
Then both algorithms compute identical $d_{k+1}$ and $\delta_{k+1}$, and both 
terminate and return $x_{k+1}$ if and only if $\delta_{k+1}=0$.
Otherwise, they compute identical $\rho_{k+1}$ and $\gamma_{k+1}$.
For the update $V_k\mapsto V_{k+1}$, we distinguish the following cases:
\begin{itemize}
\item [a)] When $k=0$, we have $j_0=j_1=0$ and $V_1=(x_0-x_1)$,
so using statements \eqref{l1} and \eqref{l2}, we find 
\[V_1^TV_1=\|x_0-x_1\|^2=s_0^2\delta_0=\gamma_0s_0.\]
\item [b)] When $k>0$ and $j_{k+1}=j_k$, then we have $j_{k+1}=j_k=0$ as well as
$V_k=(x_0-x_k,\ldots,x_{k-1}-x_k)$ and 
$V_{k+1}=(x_0-x_{k+1},\ldots,x_k-x_{k+1})$. 
Lemma \ref{update} tells us that
\begin{align*}
&\langle x_i-x_{k+1},x_j-x_{k+1}\rangle=(V_k^TV_k)_{ij}+\gamma_k\underline{s_k},
&&0\le i,j\le k-1,\\
&\langle x_i-x_{k+1},x_k-x_{k+1}\rangle=\gamma_k\underline{s_k},
&&0\le i\le k.
\end{align*}
The induction hypothesis implies that
\begin{align*}
V_{k+1}^TV_{k+1}
&=\begin{pmatrix}V_k^TV_k&0\\0^T&0\end{pmatrix}
+\gamma_k\underline{s_k}\mathbbm{1}_{\R^{k+1}}\mathbbm{1}_{\R^{k+1}}^T\\
&=\sum_{i=1}^{k-j_k}\gamma_{j_k+i-1}\underline{s_{j_k+i-1}}
\begin{pmatrix}f_i^{k-j_k}\\0\end{pmatrix}((f_i^{k-j_k})^T,0)
+\gamma_k\underline{s_k}\mathbbm{1}_{\R^{k+1}}\mathbbm{1}_{\R^{k+1}}^T\\
&=\sum_{i=1}^{k+1-j_{k+1}}\gamma_{j_{k+1}+i-1}\underline{s_{j_k+i-1}}
f_i^{k+1-j_{k+1}}(f_i^{k+1-j_{k+1}})^T,
\end{align*}
where we have used that $\gamma_k=\gamma_{k+1}=0$ and
$\mathbbm{1}_{\R^{k+1}}=f_{k+1}^{k+1}$.
\item [c)] When $k>0$ and $j_{k+1}\neq j_k$, then we have $j_{k+1}=k-\ell+2$
and $j_k=k-\ell+1$, and we consider
$V_k=(x_{k-\ell+1}-x_k,\ldots,x_{k-1}-x_k)$ and 
$V_{k+1}=(x_{k-\ell+2}-x_{k+1},\ldots,x_k-x_{k+1})$. 
Again by Lemma \ref{update}, and defining $\mu\in\R$ and $v\in\R^{\ell-1}$ by
\begin{align*}
&\mu:=\langle x_{k-\ell+1}-x_{k+1},x_{k-\ell+1}-x_{k+1}\rangle\\
&v_i:=\langle x_{k-\ell+1}-x_{k+1},x_{k-\ell+i+1}-x_{k+1}\rangle,\quad
i=1,\ldots,\ell-1,
\end{align*}
we find for similar reasons that
\begin{align}
&\begin{pmatrix}\mu&v^T\\v&V_{k+1}^TV_{k+1}\end{pmatrix}
=\begin{pmatrix}V_k^TV_k&0\\0^T&0\end{pmatrix}
+\gamma_k\underline{s_k}\mathbbm{1}_{\R^{\ell}}\mathbbm{1}_{\R^{\ell}}^T\nonumber\\
&=\sum_{i=1}^{k-j_k}\gamma_{j_k+i-1}\underline{s_{j_k+i-1}}
\begin{pmatrix}f_i^{k-j_k}\\0\end{pmatrix}((f_i^{k-j_k})^T,0)
+\gamma_k\underline{s_k}\mathbbm{1}_{\R^{\ell}}\mathbbm{1}_{\R^{\ell}}^T\label{have}\\
&=\sum_{i=1}^{k+1-j_{k}}\gamma_{j_{k}+i-1}\underline{s_{j_k+i-1}}
f_i^{k+1-j_k}(f_i^{k+1-j_k})^T.\nonumber
\end{align}
The desired statement
\begin{equation}\label{want}
V_{k+1}^TV_{k+1}=\sum_{i=1}^{k+1-j_{k+1}}\gamma_{j_{k+1}+i-1}\underline{s_{j_{k+1}+i-1}}f_i^{k+1-j_{k+1}}(f_i^{k+1-j_{k+1}})^T
\end{equation}
can be verified by a component-wise comparison of equations \eqref{have} and
\eqref{want}.
\end{itemize}
Hence in all three cases, the matrix $V_{k+1}^TV_{k+1}$ has the desired representation,
and Lemma \ref{ringed} yields that 
\[(V_{k+1}^TV_{k+1})^{-1}
=C(\gamma_{j_{k+1}}\underline{s_{j_{k+1}}},\ldots,\gamma_k\underline{s_k})
=C_{k+1}.\]
Since we have
\[M_{k+1}^TM_{k+1}=\begin{pmatrix}
V_{k+1}^TV_{k+1}&V_{k+1}^Td_{k+1}\\d_{k+1}V_{k+1}&\delta
\end{pmatrix},\]
Lemma \ref{rank:2} tells us that the remaining steps of Algorithm
\ref{greedy:affine:search} compute the solution 
$s_k=(\overline{s_k}^T,\underline{s_k})^T$ of the linear system 
\[M_{k+1}^TM_{k+1}s_{k+1}=\gamma_{k+1}e^{k+1-j_{k+1}}_{k+1-j_{k+1}}\]
and hence the same iterate $x_{k+1}$ as Algorithm \ref{basic:affine:search}.
\end{proof}

\begin{remark}\label{discuss:enhanced}
The computational complexity of one step of Algorithm \ref{greedy:affine:search}
is composed in the following way:
\begin{enumerate}[(i)]
\item The Kaczmarz cycle requires $4\nnz(A)+m$ flops.
\item The computation of $d_k$, $\delta_k$ and $\rho_k$ requires $3n+2m$ flops.
\item Assembling $V_k$ requires $(k-j_k)n$ flops.
\item Computing $p_k$ requires $2(k-j_k)n$ flops.
\item Computing $q_k$ requires $4(k-j_k)$ flops.
\item Computing $\overline{s_k}$ requires $(k-j_k)$ flops.
\item Updating $x_k$ requires $2(k-j_k+1)n$ flops.
\end{enumerate}
After the startup phase, when $k-j_k+1=\ell$, we have a total computational complexity
of roughly 
\[4\nnz(A)+(3+5\ell)n+3m+5\ell.\]
Hence Algorithm \ref{basic:affine:search} arrives at the same numerical 
results as Algorithm \ref{greedy:affine:search}, but it replaces the 
most expensive operations (assembly of $M^TM$ at roughly $\frac12\ell^2n$ flops
and solution of system \eqref{lgs} at roughly $\ell^3$ flops) with 
cheap ones (matrix-vector product with tridiagonal matrix at roughly $4\ell$ flops
and inner product at roughly $\ell$ flops).
\end{remark}

\section{Application to the random Kaczmarz method}

The random Kaczmarz method is displayed in Algorithm \ref{RK}.
We organize the iterations in epochs of $m$ projection steps.
Please refer to Algorithm \ref{RK} for the random indices used in this section.

\medskip

\begin{algorithm}\label{RK}
\caption{Random Kaczmarz method,\\
originally proposed in \cite{Strohmer}\\
(complexity: $4\nnz(A)+m$ flops per epoch with $m$ projections)}
\KwIn{
    $A\in\R^{m\times n}$,
    $b\in\R^m$,
    $x_0\in\R^n$   
}
\For{$k=0$ \KwTo $\infty$}{
    $x_{k,0}\gets x_k$\;
    \For{$j=0$ \KwTo $m-1$}{
        draw $i_{k,j}$ uniformly at random from $\{1,\ldots.m\}$\;
        $x_{k,j+1}\gets P_{i_{k,j}}(x_{k,j})$\;
    }
    $x_{k+1}\gets x_{x,m}$\;
}
\end{algorithm}

\begin{algorithm}\label{RK:accel}
\caption{Random Kaczmarz method with affine search\\
(for an upper bound on the complexity see Remark \ref{discuss:enhanced})}
\KwIn{
    $A\in\R^{m\times n}$,
    $b\in\R^m$,
    $x_0\in\R^n$,
    $\ell\in\N_2$
}
\For{$k=0$ \KwTo $\infty$}{
	\Repeat{$x_{k,m}\neq x_k$}{
	    $\rho_k\gets 0$\;
        $x_{k,0}\gets x_k$\;
        \For{$j=0$ \KwTo $m-1$}{
            draw $i_{k,j}$ uniformly at random from $\{1,\ldots.m\}$\;
            $\rho_k\gets\rho_k+(a_{i_{k,j}}^Tx_{k,j}-b_{i_{k,j}})^2/\|a_{i_{k,j}}\|^2$\;
            $x_{k,j+1}\gets P_{i_{k,j}}(x_{k,j})$\;
        }
    }
    $d_k\gets x_{k,m}-x_k$\;
    $\delta_k\gets\|d_k\|^2$\;
    $\gamma_k\gets\frac12(\rho_k+\delta_k)$\;
    $j_k\gets\max\{k-\ell+1,0\}$\;    
    $V_k\gets(x_{j_k}-x_k,\ldots,x_{k-1}-x_k)\in\R^{n\times(k-{j_k})}$\;
    $p_k\gets V_k^Td_k\in\R^{k-{j_k}}$\;
    $C_k\gets C(\gamma_{j_k}\underline{s_{j_k}},\ldots,\gamma_{k-1}\underline{s_{k-1}})
        \in\R^{(k-{j_k})\times(k-{j_k})}$\;
    $q_k\gets C_kp_k
        \in\R^{k-{j_k}}$\;
    $\underline{s_k}\gets\frac{\gamma_k}{p_k^Tq_k-\delta_k}\in\R$\;
    $\overline{s_k}\gets-\underline{s_k}q_k\in\R^{k-{j_k}}$\;
    $x_{k+1}\gets x_k+V_k\overline{s_k}+\underline{s_k}d_k$\;    
}
\end{algorithm}

The random Kaczmarz method is known to converge in expectation
when $A$ has full rank and $x^*:=A^{-1}b$ is unique, see Theorem 2 in \cite{Strohmer}.
The final statement of the induction step in its proof shows that
\begin{equation}\label{stro}
\mathbbm{E}\|x_{k+1}-x^*\|^2\le(1-\kappa(A)^{-2})^m\mathbbm{E}\|x_k-x^*\|^2
\quad\forall\,k\in\N,
\end{equation}
where $\kappa(A)>0$ is a specific condition number, see Section 1 of \cite{Strohmer}
for details. 
Statement \eqref{stro} reveals that an acceleration of the sequence $(x_k)_k$
that reduces the error $\|x_k-x^*\|^2$ maintains the convergence properties of the 
sequence as well as the error estimate.
This motivates us to transfer the acceleration techniques from the previous
sections to the random Kaczmarz method, which gives Algorithm \ref{RK:accel}.

\begin{remark}\label{rK:remark}
Algorithm \ref{RK:accel} is justified by the following reasoning:
The $k$-th epoch of the random Kaczmarz method can be regarded as one cycle
\[\tilde{P}^{(k)}(x):=(P_{i_{k,m-1}}\circ\ldots\circ P_{i_{k,0}})(x)\]
of the deterministic Kaczmarz method applied to the matrix 
$\tilde{A}^{(k)}\in\R^{m\times n}$
and a vector $\tilde{b}^{(k)}\in\R^m$ given by
\[\tilde{A}^{(k)}:=(a_{i_{k,0}},\ldots,a_{i_{k,m-1}})^T,\quad
\tilde{b}^{(k)}:=(b_{i_{k,0}},\ldots,b_{i_{k,m-1}})^T,\]
which gives rise to the residual
\[\tilde{r}^{(k)}(x):=\begin{pmatrix}
(a_{i_{k,0}}^Tx-b_{i_{k,0}})/\|a_{i_{k,0}}\|\\
(a_{i_{k,1}}^TP_{i_{k,0}}(x)-b_{i_{k,1}})/\|a_{i_{k,1}}\|\\
\vdots\\
(a_{i_{k,m-1}}^TP_{i_{k,m-2}}\circ\ldots\circ P_{i_{k,0}}(x)-b_{i_{k,m-1}})/\|a_{i_{k,m-1}}\|
\end{pmatrix}.\]
The solution $x^*=A^{-1}b$ also solves $\tilde{A}^{(k)}x^*=\tilde{b}^{(k)}$,
and all statements on errors given in the previous sections remain valid.

\medskip

However, it is no longer true that $\tilde{P}^{(k)}(x_k)\in\aff(x_{j_k},\ldots,x_k)$ 
if and only if $Ax_k=b$, because $\tilde{A}^{(k)}$ 
and $\tilde{b}^{(k)}$ are only subsamples of $A$ and $b$, which invalidates 
the stopping criteria we previously used.
On the other hand, the inclusion $\tilde{P}^{(k)}(x_k)\in\aff(x_{j_k},\ldots,x_k)$ 
still implies that $\tilde{P}^{(k)}(x_k)=x_k$ and $\tilde{r}^{(k)}(x_k)=0$.
Hence, in this situation, we can ignore the last epoch in the acceleration scheme
(see line 10 of Algorithm \ref{RK:accel}), and accelerate only when the random
Kaczmarz method made progress.
This guarantees that whenever Theorem \ref{affine:search:thm} is invoked,
its assumptions are satisfied.
\end{remark}

\begin{figure}[t]
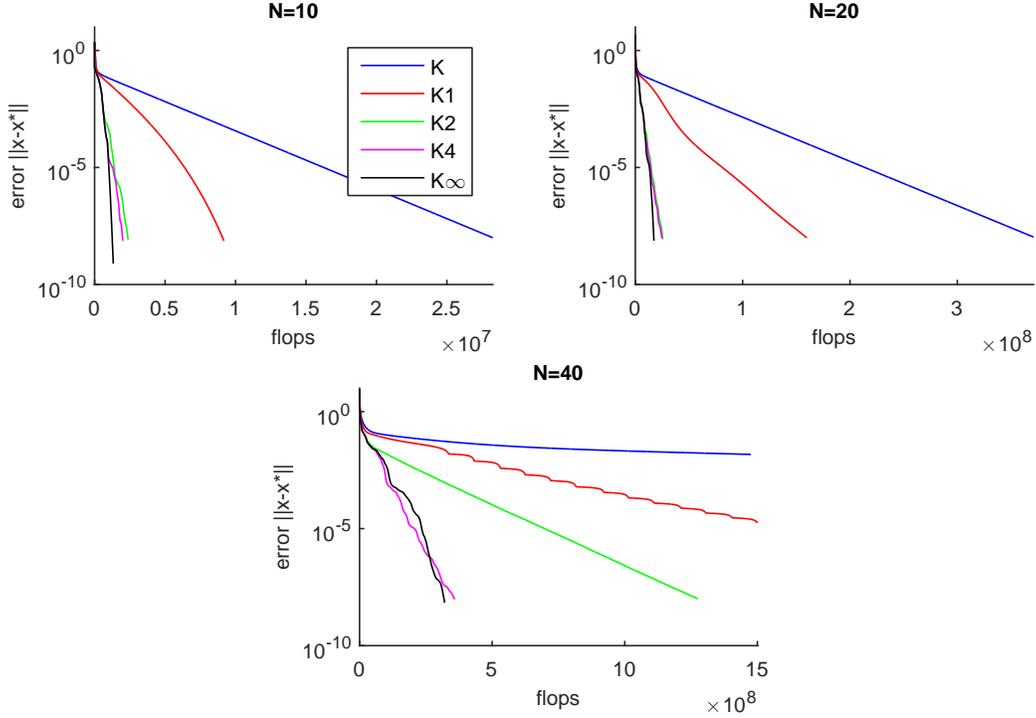
\begin{center}
\includegraphics[scale=0.8]{updated_10.eps}\hfill
\includegraphics[scale=0.8]{updated_20.eps}\\
\includegraphics[scale=0.8]{updated_40.eps}
\caption{Algorithm \ref{greedy:affine:search} in CT example
with $N\times N$ pixels, $N=10,20,40$.
Notation: 
K - Kaczmarz method, 
K1 - Kaczmarz method with line-search,
K$\ell$ - Kaczmarz method with at most $\ell$-dimensional affine search space.
\label{uK}}
\end{center}\end{figure}

\begin{figure}[t]
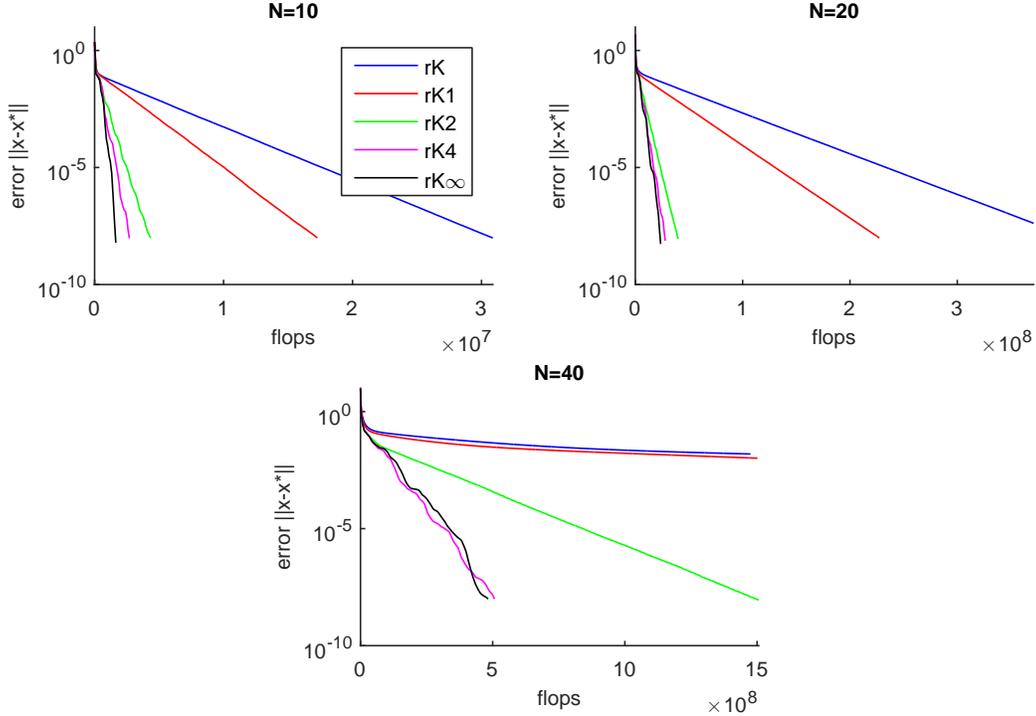
\begin{center}
\includegraphics[scale=0.8]{random_10.eps}\hfill
\includegraphics[scale=0.8]{random_20.eps}\\
\includegraphics[scale=0.8]{random_40.eps}
\caption{Algorithm \ref{RK:accel} in CT example
with $N\times N$ pixels, $N=10,20,40$.
Notation: 
rK - random Kaczmarz method, 
K1 - random Kaczmarz method with line-search,
K$\ell$ - random Kaczmarz method with at most $\ell$-dimensional affine search space.\label{rK}}
\end{center}\end{figure}

\section{Numerical results}

We test the algorithms presented in this paper in the context of the 
computerized tomography problem, which is one of the most important 
applications of the Kaczmarz method. 
To generate benchmark problems, we apply the \texttt{paralleltomo} function 
of the AIR Tools library \cite{Hansen} with default parameters to the 
Shepp-Logan medical phantom.

\medskip

In all simulations, we choose the initial guess $x_0=0$ and apply an initial
random shuffling to the rows of $A$, because the canonical ordering tends to
induce very slow and hence atypical convergence.
To see how the method behaves under scaling, we investigate the following 
scenarios, where
\[\mathrm{oncost}(\ell):=
\frac{\mathrm{cost}(\mathrm{acceleration}(\ell))}
{\mathrm{cost}(\mathrm{Kaczmarz\ cycle})}
\approx\frac{(3+5\ell)n+2m+5\ell}{4\nnz(A)+m}\]
measures the cost of an acceleration step relative in terms of the cost 
of a Kaczmarz cycle:
\begin{itemize}
\item [a)] object resolution 10x10, 
process matrix $A\in\R^{2296\times 100}$, 
number of nonzero elements $\mathrm{nnz}(A)=22820$, 
sparsity $\frac{\mathrm{nnz}(A)}{\#\mathrm{entries}(A)}\approx 0.1$,
condition number $\mathrm{cond}(A)\approx 62$,
$\mathrm{oncost}(\ell)\approx 0.052+0.005\ell$.
\item [b)] object resolution 20x20,
process matrix $A\in\R^{4584\times 400}$, 
number of nonzero elements $\mathrm{nnz}(A)=91608$, 
sparsity $\frac{\mathrm{nnz}(A)}{\#\mathrm{entries}(A)}=0.05$, 
condition number $\mathrm{cond}(A)\approx 114$,
$\mathrm{oncost}(\ell)\approx 0.028+0.005\ell$.
\item [c)] object resolution 40x40,
process matrix $A\in\R^{9178\times 1600}$, 
number of nonzero elements $\mathrm{nnz}(A)=366496$, 
sparsity $\frac{\mathrm{nnz}(A)}{\#\mathrm{entries}(A)}=0.025$, 
condition number $\mathrm{cond}(A)\approx 480$,
$\mathrm{oncost}(\ell)\approx 0.016+0.005\ell$.
\end{itemize}
We see that in all three scenarios, the cost of an acceleration step relative
to the cost of a Kaczmarz cycle is small, though the matrices are moderately
sparse.

\medskip

It is well-known that the normal equations \eqref{lgs} are prone to become
ill-conditioned.
We observed unstable behavior of Algorithm \ref{basic:affine:search} e.g.\ 
in scenario a) at $\ell=20$.
Though Algorithm \ref{greedy:affine:search} essentially solves the same problem,
it remained stable, which is probably a benefit of applying an explicitly known 
inverse over solving the linear system numerically. 
Only when the approximation error $\|x_k-x^*\|$ was very small (roughly $10^{-13}$),
we observed instability in the form of oscillating errors.

Algorithm \ref{greedy:affine:search} always performs better than Algorithm
\ref{basic:affine:search}, and the outperformance increases with the problem dimension 
$n$ and the dimension $\ell$ of the affine search space, see Remark \ref{rem:affine}b
and Remark \ref{discuss:enhanced}.
As it is also more stable, we only display the numerical errors of Algorithm
\ref{greedy:affine:search} in Figure \ref{uK}.

\medskip

The wobble that is most pronounced in the error plot of $K_1$ for $N=40$ is neither
caused by an unstable algorithm nor an artefact.
It is typical across a range of acceleration schemes (not presented in this 
paper, but investigated by the author numerically) and seems to be caused 
by going back and forth between the Euclidean geometry and the geometry of the
Kaczmarz map $P$.

Roughly speaking, the error curves of the accelerated Kaczmarz methods
cluster at the error curve of $K_\infty$, which is the variant of 
Algorithm \ref{greedy:affine:search} that spans the affine search space using 
all previously computed iterates in every step.
This seems to suggest that the benefit of working with many or all previous iterates
outweighs the additional cost incurred by processing them.

\medskip

Figure \ref{rK} shows the performance of Algorithm \ref{RK:accel}, and it
demonstrates that our acceleration technique can be successfully applied 
to the random Kaczmarz method.
Comparing figures \ref{uK} and \ref{rK} in terms of absolute values is not
meaningful, because the performance of the deterministic Kaczmarz method depends 
on the chosen order of the rows of $A$, and the performance of the random 
Kaczmarz method depends to some degree on the particular random numbers drawn.

However, there seems to be a slight qualitative difference in the performance
of the accelerated methods between the deterministic and the random setting.
In the random setting, a larger $\ell$ seems to be needed to achieve a similar 
level of outperformance of the plain Kaczmarz method as in the deterministic setting, which is particularly noticeable in the error plot of the methods $K_1$ and $rK_1$ 
for $N=40$. 
On the other hand, in both settings, the method $rK_\infty$ always clearly outperforms
the Kaczmarz method.

\bibliographystyle{plain}
\bibliography{gGKa}

\end{document}